\begin{document}

\newtheorem{thm}{Theorem}[section]
\newtheorem{theorem}{Theorem}[section]
\newtheorem{lem}[thm]{Lemma}
\newtheorem{lemma}[thm]{Lemma}
\newtheorem{prop}[thm]{Proposition}
\newtheorem{proposition}[thm]{Proposition}
\newtheorem{cor}[thm]{Corollary}
\newtheorem{defn}[thm]{Definition}
\newtheorem{remark}[thm]{Remark}
\newtheorem{conj}[thm]{Conjecture}

\numberwithin{equation}{section}

\newcommand{\Z}{{\mathbb Z}} 
\newcommand{\Q}{{\mathbb Q}}
\newcommand{\R}{{\mathbb R}}
\newcommand{\C}{{\mathbb C}}
\newcommand{\N}{{\mathbb N}}
\newcommand{\FF}{{\mathbb F}}
\newcommand{\fq}{\mathbb{F}_q}
\newcommand{\rmk}[1]{\footnote{{\bf Comment:} #1}}

\newcommand{\bfA}{{\boldsymbol{A}}}
\newcommand{\bfY}{{\boldsymbol{Y}}}
\newcommand{\bfX}{{\boldsymbol{X}}}
\newcommand{\bfZ}{{\boldsymbol{Z}}}
\newcommand{\bfa}{{\boldsymbol{a}}}
\newcommand{\bfy}{{\boldsymbol{y}}}
\newcommand{\bfx}{{\boldsymbol{x}}}
\newcommand{\bfz}{{\boldsymbol{z}}}
\newcommand{\F}{\mathcal{F}}
\newcommand{\Gal}{\mathrm{Gal}}
\newcommand{\Fr}{\mathrm{Fr}}
\newcommand{\Hom}{\mathrm{Hom}}
\newcommand{\GL}{\mathrm{GL}}

\renewcommand{\mod}{\;\operatorname{mod}}
\newcommand{\ord}{\operatorname{ord}}
\newcommand{\TT}{\mathbb{T}}
\renewcommand{\i}{{\mathrm{i}}}
\renewcommand{\d}{{\mathrm{d}}}
\renewcommand{\^}{\widehat}
\newcommand{\HH}{\mathbb H}
\newcommand{\Vol}{\operatorname{vol}}
\newcommand{\area}{\operatorname{area}}
\newcommand{\tr}{\operatorname{tr}}
\newcommand{\norm}{\mathcal N} 
\newcommand{\intinf}{\int_{-\infty}^\infty}
\newcommand{\ave}[1]{\left\langle#1\right\rangle} 
\newcommand{\Var}{\operatorname{Var}}
\newcommand{\Prob}{\operatorname{Prob}}
\newcommand{\sym}{\operatorname{Sym}}
\newcommand{\disc}{\operatorname{disc}}
\newcommand{\CA}{{\mathcal C}_A}
\newcommand{\cond}{\operatorname{cond}} 
\newcommand{\lcm}{\operatorname{lcm}}
\newcommand{\Kl}{\operatorname{Kl}} 
\newcommand{\leg}[2]{\left( \frac{#1}{#2} \right)}  
\newcommand{\Li}{\operatorname{Li}}

\newcommand{\sumstar}{\sideset \and^{*} \to \sum}

\newcommand{\LL}{\mathcal L} 
\newcommand{\sumf}{\sum^\flat}
\newcommand{\Hgev}{\mathcal H_{2g+2,q}}
\newcommand{\USp}{\operatorname{USp}}
\newcommand{\conv}{*}
\newcommand{\dist} {\operatorname{dist}}
\newcommand{\CF}{c_0} 
\newcommand{\kerp}{\mathcal K}

\newcommand{\Cov}{\operatorname{cov}}
\newcommand{\Sym}{\operatorname{Sym}}

\newcommand{\ES}{\mathcal S} 
\newcommand{\EN}{\mathcal N} 
\newcommand{\EM}{\mathcal M} 
\newcommand{\Sc}{\operatorname{Sc}} 
\newcommand{\Ht}{\operatorname{Ht}}

\newcommand{\E}{\operatorname{E}} 
\newcommand{\sign}{\operatorname{sign}} 

\newcommand{\divid}{d} 

\title[Rudnick and Soundararajan's Theorem for Function Fields]
{Rudnick and Soundararajan's Theorem for Function Fields}
\author{Julio Andrade}
\address{Mathematical Institute, University of Oxford, Radcliffe Observatory Quarter, Woodstock Road, Oxford, OX2 6GG, UK}
\email{buenodeandra@maths.ox.ac.uk}


\subjclass[2010]{Primary 11M38; Secondary 11G20, 11M50, 14G10}
\keywords{function fields \and finite fields \and hyperelliptic curves \and lower bounds for moments \and moments of $L$--functions \and quadratic Dirichlet $L$--functions \and random matrix theory}

\begin{abstract}
In this paper we prove a function field version 
of a theorem by Rudnick and Soundararajan about 
lower bounds for moments of quadratic Dirichlet 
$L$--functions. We establish lower bounds for the 
moments of quadratic Dirichlet $L$--functions 
associated to hyperelliptic curves of genus $g$ 
over a fixed finite field $\mathbb{F}_{q}$ in the 
large genus $g$ limit.
\end{abstract}
\date{\today}

\maketitle

\section{Introduction}

It is a fundamental problem in 
analytic number theory to estimate
moments of central values of $L$--functions 
in families. For example, in the case of 
the Riemann zeta function the question 
is to establish asymptotic formulae for

\begin{equation}
\label{1.1}
M_{k}(T):=\int_{1}^{T}|\zeta(\tfrac{1}{2}+it)|^{2k}dt,
\end{equation} 
where $k$ is a positive integer and $T\rightarrow\infty$.

A believed folklore conjecture asserts that, 
as $T\rightarrow\infty$, there is a positive 
constant $C_{k}$ such that

\begin{equation}
\label{1.2}
M_{k}(T)\sim C_{k}T(\log T)^{k^{2}}.
\end{equation}   
Due to the work of Conrey and Ghosh \cite{Con-Gho} 
the conjecture above assumes a more explicit form, namely

\begin{equation}
\label{1.3}
C_{k}=\frac{a_{k}g_{k}}{\Gamma(k^{2}+1)},
\end{equation}
where 

\begin{equation}
\label{1.4}
a_{k}=\prod_{p \ \text{prime}}\left[\left(1-\frac{1}{p}\right)^{k^{2}}\sum_{m\geq0}\frac{d_{k}(m)^{2}}{p^{m}}\right],
\end{equation}
$g_{k}$ is an integer when $k$ is an integer 
and $d_{k}(n)$ is the number of ways to 
represent $n$ as a product of $k$ factors.

Asymptotics for $M_{k}(T)$ are only known 
for $k=1$, due to Hardy and Littlewood \cite{Har-Lit}

\begin{equation}
\label{1.5}
M_{1}(T)\sim T\log T,
\end{equation}
and for $k=2$, due to Ingham \cite{Ing}

\begin{equation}
\label{1.6}
M_{2}(T)\sim\frac{1}{2\pi^{2}}T\log^{4}T.
\end{equation}
Unfortunately the recent technology does not allow us to obtain asymptotics for higher moments of the Riemann zeta function. The same statement applies for the higher moments of other $L$--functions. However, due to the precursor work of Keating and Snaith \cite{Kea-Sna1,Kea-Sna2} and, subsequently, due to the work of Conrey, Farmer, Keating, Rubinstein and Snaith \cite{CFKRS}, and Diaconu, Goldfeld and Hoffstein \cite{DGH}, there are now very elegant conjectures for moments of $L$--functions.

The work of Katz and Sarnak \cite{Ka-Sa1,Ka-Sa2} associates a symmetry group for each family of $L$--function and the moments are sensitive and take different forms for each one of these groups. In other words the conjectured asymptotic formulas for the moments of families of $L$--function depends whether the symmetry group attached to the family is unitary, orthogonal or symplectic. For a recent and detailed discussion about a working definition of a family of $L$--functions see \cite{SST}.

We will typify the conjectures above by considering different families of $L$--functions. For example, the family of all Dirichlet $L$--functions $L(s,\chi)$, as $\chi$ varies over primitive characters $(\bmod \ q)$, is an example of a unitary family, and it is conjectured that

\begin{equation}
\label{1.7}
\sideset{}{^{*}}\sum_{\chi(\bmod q)}|L(\tfrac{1}{2},\chi)|^{2k}\sim C_{U(N)}(k)q(\log q)^{k^{2}},
\end{equation}  
where $k\in\mathbb{N}$ and $C_{U(N)}(k)$ is a positive constant. For a symplectic family of $L$--functions we consider the quadratic Dirichlet $L$--functions $L(s,\chi_{d})$ associated to the quadratic character $\chi_{d}$, as $d$ varies over fundamental discriminants. In this case it is conjectured that

\begin{equation}
\label{1.8}
\sideset{}{^{\flat}}\sum_{|d|\leq X}L(\tfrac{1}{2},\chi_{d})^{k}\sim C_{USp(2N)}(k)X(\log X)^{k(k+1)/2},
\end{equation}
where $k\in\mathbb{N}$ and $C_{USp(2N)}(k)$ is a positive constant. And finally we consider the family of $L$--functions associated to Hecke eigencuspforms $f$ of weight $k$ for the full modular group $SL(2,\mathbb{Z})$ as $f$ varies in the set $H_{k}$ of Hecke eigencuspforms. This is an example of an orthogonal family and it is conjectured that

\begin{equation}
\label{1.9}
\sideset{}{^{h}}\sum_{f\in H_{k}}L(\tfrac{1}{2},f)^{r}\sim C_{O(N)}(r)(\log k)^{r(r-1)/2},
\end{equation}
where $C_{O(N)}(r)$ is a positive constant, $k\equiv0(\bmod \ 4)$ and

\begin{equation}
\label{1.10}
\sideset{}{^{h}}\sum_{f\in H_{k}}L(\tfrac{1}{2},f)^{r}:=\sum_{f\in H_{k}}\frac{1}{\omega_{f}}L(\tfrac{1}{2},f)^{r},
\end{equation}
with

\begin{equation}
\label{1.11}
\omega_{f}:=\frac{(4\pi)^{k-1}}{\Gamma(k-1)}\left\langle f,f\right\rangle=\frac{k-1}{2\pi^{2}}L(1,\text{Sym}^{2}f),
\end{equation}
where $\left\langle f,f\right\rangle$ denotes the Petersson inner product. For more details on Hecke eigencuspforms $L$--functions see Iwaniec \cite{Iwa}.

The conjectures \eqref{1.2}, \eqref{1.7} and \eqref{1.8} can be verified for small values of $k$ and the same holds for \eqref{1.9}, where it can be verified only for small values of $r$. Ramachandra \cite{Ram} showed that

\begin{equation}
\label{1.12}
\int_{1}^{T}|\zeta(\tfrac{1}{2}+it)|^{2k}dt\gg T(\log T)^{k^{2}},
\end{equation}
for positive integers $k$. Titchmarsh \cite[Theorem 7.19]{Tit} had proved a smooth version of these lower bound for positive integer $k$. The work of Heath--Brown \cite{HB} extends \eqref{1.12} for all positive rational numbers $k$. Recently Radziwi\l\l \ and Soundararajan \cite{Ra-So} proved that

\begin{equation}
\label{1.13}
M_{k}(T)\geq e^{-30k^{4}}T(\log T)^{k^{2}},
\end{equation}
for any real number $k>1$ and all large $T$. For other families of $L$--functions, as those given above, the lower bounds for moments were proved by Rudnick and Soundarajan in \cite{Ru-So1,Ru-So2} where they have established that

\begin{equation}
\label{1.14}
\sideset{}{^{*}}\sum_{\substack{\chi(\bmod q) \\ \chi\neq\chi_{0}}}|L(\tfrac{1}{2},\chi)|^{2k}\gg_{k}q(\log q)^{k^{2}},
\end{equation}
for a fixed natural number $k$ and all large primes $q$. They also proved in \cite{Ru-So2} that

\begin{equation}
\label{1.15}
\sideset{}{^{h}}\sum_{f\in H_{k}}L(\tfrac{1}{2},f)^{r}\gg_{r}(\log k)^{r(r-1)/2},
\end{equation}
for any given natural number $r$, and weight $k\geq12$ with $k\equiv0(\bmod \ 4)$. And for the symplectic family they showed that for every even natural number $k$ 

\begin{equation}
\label{1.16}
\sideset{}{^{\flat}}\sum_{|d|\leq X}L(\tfrac{1}{2},\chi_{d})^{k}\gg_{k}X(\log X)^{k(k+1)/2},
\end{equation}
where the sum is taken over fundamental discriminants $d$. Radziwi\l\l \ and Soundararajan \cite{Ra-So} pointed out that their method may easily be modified to provide lower bounds for moments to the case of $L$--functions in families, for any real number $k>1$.

Recently, in a beautiful paper, Tamam \cite{Ta} proved the function field analogue of \eqref{1.14}. In this paper we consider the function field analogue of equation \eqref{1.16} for quadratic Dirichlet $L$--functions associated to a family of hyperelliptic curves over $\mathbb{F}_{q}$. See next section.

\section{Main Theorem}

Before we enunciate the main theorem of this paper we need a few basic 
facts about rational function fields. We start by fixing
a finite field $\mathbb{F}_{q}$ of odd cardinality $q=p^{a}$
with $p$ a prime. And we denote by $A=\mathbb{F}_{q}[T]$ the
polynomial ring over $\mathbb{F}_{q}$ and by $k=\mathbb{F}_{q}(T)$ the
rational function field over $\mathbb{F}_{q}$.

The zeta function associated to $A$ is defined by the following 
Dirichlet series

\begin{equation}
\label{2.1}
\zeta_{A}(s):=\sum_{\substack{f\in A \\ \text{monic}}}\frac{1}{|f|^{s}} \ \ \ \ \ \ \ \ \text{for Re$(s)>1$},
\end{equation}
where $|f|=q^{\text{deg}(f)}$ for $f\neq0$ and $|f|=0$ for $f=0$. Surprisingly
the zeta function associated to $A$ is a much simpler object than the usual Riemann zeta function and can be showed that

\begin{equation}
\label{2.2}
\zeta_{A}(s)=\frac{1}{1-q^{1-s}}.
\end{equation}

Let $D$ be a square--free monic polynomial in $A$ of degree odd. Then we define the quadratic character $\chi_{D}$ attached to $D$ by making use of the quadratic residue symbol for $\mathbb{F}_{q}[T]$ by

\begin{equation}
\label{2.3}
\chi_{D}(f)=\left(\frac{D}{f}\right).
\end{equation}
In other words, if $P\in A$ is monic irreducible we have

\begin{equation}
\label{2.4}
\chi_{D}(P)=\Bigg\{
\begin{array}{cl}
0, & \mathrm{if}\ P\mid D,\\
1, & \mathrm{if}\ P\not{|} D \ \mathrm{and} \ D \ \mathrm{is \ a \ square \ modulo} \ P,\\
-1, & \mathrm{if}\ P\not{|} D \ \mathrm{and} \ D \ \mathrm{is \ a \ non\ square \ modulo} \ P.\\
\end{array}
\end{equation}
For more details about Dirichlet characters for function fields see \cite[Chapter 3]{Ros} and \cite{Fai-Rud}.

We attach to the character $\chi_{D}$ the quadratic Dirichlet $L$--function defined by

\begin{equation}
\label{2.5}
L(s,\chi_{D})=\sum_{\substack{f\in A \\ f \ \text{monic}}}\frac{\chi_{D}(f)}{|f|^{s}} \ \ \ \ \ \ \ \ \text{for Re$(s)>1$}.
\end{equation}

If $D\in\mathcal{H}_{2g+1,q}$, where 

\begin{equation}
\label{2.6}
\mathcal{H}_{2g+1,q}=\{\text{$D\in A$, square--free, monic and deg$(D)=2g+1$}\},
\end{equation}
then the $L$--function associated to $\chi_{D}$ is indeed the numerator of the zeta function associated to the hyperelliptic curve $C_{D}:y^{2}=D(T)$ and therefore $L(s,\chi_{D})$ is a polynomial in $u=q^{-s}$ of degree $2g$ given by

\begin{equation}
\label{2.7}
L(s,\chi_{D})=\sum_{n=0}^{2g}\sum_{\substack{f \ \text{monic} \\ \text{deg}(f)=n}}\chi_{D}(f)q^{-ns}.
\end{equation}
(see \cite[Propositions 14.6 and 17.7]{Ros} and \cite[Section 3]{And-Kea}).

This $L$--function satisfies a functional equation, namely

\begin{equation}
\label{2.8}
L(s,\chi_{D})=(q^{1-2s})^{g}L(1-s,\chi_{D}),
\end{equation}
and the Riemann hypothesis for curves proved by Weil \cite{Wei} tell us that all the zeros of $L(s,\chi_{D})$ have real part equals $1/2$. 

The main result of this paper is now presented:

\begin{theorem}
\label{thm:main}
For every even natural number $k$ we have,

\begin{equation}
\frac{1}{\#\mathcal{H}_{2g+1,q}}\sum_{D\in\mathcal{H}_{2g+1,q}}L(\tfrac{1}{2},\chi_{D})^{k}\gg_{k}(\log_{q}|D|)^{k(k+1)/2}.
\end{equation}
\end{theorem}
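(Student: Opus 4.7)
The plan is to adapt the mollifier argument of Rudnick and Soundararajan \cite{Ru-So2} to the function field setting. Define the short Dirichlet polynomial
$$M(D) := \sum_{\substack{f \text{ monic} \\ \deg f \leq x}} \frac{d_{k-1}(f)\,\chi_D(f)}{|f|^{1/2}},$$
where $d_{k-1}(f)$ denotes the $(k-1)$-fold divisor function on $A = \mathbb{F}_q[T]$ and $x$ is a parameter to be chosen as a constant multiple of $g$. Heuristically $M(D)$ approximates $L(\tfrac{1}{2},\chi_D)^{k-1}$, via the formal identity $L(s,\chi_D)^{k-1} = \sum_f d_{k-1}(f)\chi_D(f)|f|^{-s}$.

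Since $k$ is even, $L(\tfrac{1}{2},\chi_D)^k$ is non-negative, and H\"older's inequality applied to sums over $D \in \mathcal{H}_{2g+1,q}$ yields
$$\sum_{D} L(\tfrac{1}{2},\chi_D)\, M(D)^{k-1} \leq \Big(\sum_{D} L(\tfrac{1}{2},\chi_D)^{k}\Big)^{1/k} \Big(\sum_{D} M(D)^{k}\Big)^{(k-1)/k}.$$
Rearranging, the theorem reduces to proving two estimates of matching order:
$$\sum_{D} M(D)^k \ll_k \#\mathcal{H}_{2g+1,q} \cdot (2g+1)^{k(k+1)/2},$$
$$\sum_{D} L(\tfrac{1}{2},\chi_D)\, M(D)^{k-1} \gg_k \#\mathcal{H}_{2g+1,q} \cdot (2g+1)^{k(k+1)/2}.$$

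For the upper bound, expand $M(D)^k$ as a $k$-fold sum over monic polynomials $f_1,\ldots,f_k$, interchange with the sum over $D$, and evaluate the resulting average $\frac{1}{\#\mathcal{H}_{2g+1,q}}\sum_{D} \chi_D(f_1 \cdots f_k)$ using the explicit orthogonality/Poisson summation formula for quadratic characters over the hyperelliptic ensemble. Provided $x$ is chosen small enough relative to $g$ (heuristically $kx < 2g+1$), the dual terms are negligible, and the main contribution comes from $k$-tuples with $f_1 \cdots f_k$ a perfect square in $A$. This diagonal sum factors as an Euler product over monic irreducibles in $A$, which a standard multiplicative computation evaluates to a constant multiple of $x^{k(k+1)/2}$; choosing $x$ of order $g$ then yields the upper bound. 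For the lower bound, expand $L(\tfrac{1}{2},\chi_D)$ via the polynomial representation \eqref{2.7}, combine with $M(D)^{k-1}$, and repeat the character-sum evaluation; the additional sum of length $2g+1$ coming from the $L$-function contributes one extra power of $2g+1$ relative to the upper bound, yielding a positive main term of the required order.

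The principal technical obstacle is the precise evaluation of the averaged character sum $\frac{1}{\#\mathcal{H}_{2g+1,q}}\sum_{D} \chi_D(h)$ for composite arguments $h$, together with the multiplicative bookkeeping needed to extract a positive constant multiple of $(2g+1)^{k(k+1)/2}$ from the sum over squarefull tuples. One must calibrate $x$ to eliminate the dual contribution from Poisson summation while keeping it large enough that the main term dominates, and one must verify that the arithmetic Euler factor arising in the diagonal is uniformly bounded below by a positive constant depending only on $k$. The evenness of $k$ is used both to apply H\"older (ensuring the $k$-th moment sums non-negative quantities) and to streamline the combinatorial identification of the diagonal square-product tuples that produce the exponent $k(k+1)/2$.
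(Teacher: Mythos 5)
Your overall architecture --- H\"older with exponents $k$ and $k/(k-1)$, evaluation of $\sum_{D}\chi_D(h)$ through the square/non--square dichotomy, the approximate functional equation for the $L$--factor, and a length parameter $x$ that is a small multiple of $g$ --- is exactly the paper's. The gap is in your choice of coefficients for the auxiliary Dirichlet polynomial. The paper (following Rudnick--Soundararajan's symplectic argument) takes the \emph{unweighted} sum $A(D)=\sum_{\deg n\le x}\chi_D(n)|n|^{-1/2}$, which mimics $L(\tfrac12,\chi_D)$ itself; then $S_2=\sum_D A(D)^k$ and $S_1=\sum_D L(\tfrac12,\chi_D)A(D)^{k-1}$ are of the \emph{same} order $\#\mathcal{H}_{2g+1,q}\,g^{k(k+1)/2}$, and $S_1^k/S_2^{k-1}$ gives the theorem. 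Your $M(D)$ carries the weights $d_{k-1}(f)$ and so mimics $L(\tfrac12,\chi_D)^{k-1}$, and the two sides of your H\"older inequality no longer match. Concretely, in $\sum_D M(D)^k$ the diagonal terms $f_1\cdots f_k=m^2$ carry the coefficient $\sum_{f_1\cdots f_k=m^2}\prod_j d_{k-1}(f_j)=d_{k(k-1)}(m^2)$, so this diagonal alone has size $\asymp \#\mathcal{H}_{2g+1,q}\,g^{K(K+1)/2}$ with $K=k(k-1)$; hence your claimed bound $\sum_D M(D)^k\ll_k \#\mathcal{H}_{2g+1,q}\,(2g+1)^{k(k+1)/2}$ is false for every even $k\ge 4$ (for $k=2$ one has $d_1\equiv 1$ and your construction collapses to the paper's, which is why the case $k=2$ looks fine).

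Nor can the argument be repaired by simply accepting the larger value of $\sum_D M(D)^k$: heuristically $\sum_D L(\tfrac12,\chi_D)M(D)^{k-1}\asymp \#\mathcal{H}_{2g+1,q}\,g^{a}$ with $a=\tfrac{(k^2-2k+2)(k^2-2k+3)}{2}$ while $\sum_D M(D)^k\asymp \#\mathcal{H}_{2g+1,q}\,g^{b}$ with $b=\tfrac{(k^2-k)(k^2-k+1)}{2}$, so H\"older can deliver at best the exponent $ka-(k-1)b$; for $k=4$ this is $4\cdot 55-3\cdot 78=-14$, far below the target $k(k+1)/2=10$. The fix is simply to drop the divisor weights and work with the paper's $A(D)$. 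With that change the remaining steps of your outline do go through as in the paper: the character--sum input is Lemma \ref{lem4} (built from the P\'olya--Vinogradov bound of Lemma \ref{lem2} and the square--free count of Lemma \ref{lem3}, rather than a Poisson summation formula), the diagonal is evaluated by the Selberg--Delange type estimate $\sum_{\deg m\le y}d_k(m^2)a_m/|m|\asymp y^{k(k+1)/2}$, and in $S_1$ the long $f$--sum from the approximate functional equation contributes the one extra factor of $g$ that balances the count, exactly as in equations \eqref{4.38}--\eqref{4.41}.
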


\begin{remark}
To avoid any misunderstanding concerning the notation and conventions presented in this paper it is necessary a note about the notation used in the theorem above and in the rest of this note. On the formula above the right-hand side of the main lower bound appears $|D|=q^{2g+1}$ while $D$ is the summation variable on the left-hand side of that same formula. This is done because the function $D\mapsto|D|$ is constant within $\mathcal{H}_{2g+1,q}$ and so we can always write 

$$\sum_{D\in\mathcal{H}_{2g+1,q}}|D|=|D|\sum_{D\in\mathcal{H}_{2g+1,q}}1.$$
In the case the reader feel uncomfortable with the above notation he/she can always remember that $|D|=q^{2g+1}$.
\end{remark}

\begin{remark}
\label{rmk1}
For simplicity, we will restrict ourselves to the fundamental discriminants $D\in A$, $D$ monic and $\text{deg}(D)=2g+1$. But the calculations are analogous for the even case, i.e., $\text{deg}(D)=2g+2$. 
\end{remark}

Using the same techniques developed by Rudnick and Soundararajan in \cite{Ru-So1, Ru-So2} and extended for function fields in this paper we can also prove the following theorem.

\begin{theorem}
For every even natural number $k$ and $n=2g+1$ or $n=2g+2$ we have,

\begin{equation}
\frac{1}{\pi_{A}(n)}\sum_{\substack{P \ \text{monic} \\ \text{irreducible} \\ \text{deg}(P)=n}}L(\tfrac{1}{2},
\chi_{P})^{k}\gg_{k}(\log_{q}|P|)^{\tfrac{k(k+1)}{2}},
\end{equation}
where $\pi_{A}(n)=\#\left\{\text{$P\in\mathbb{F}_{q}[T]$ monic and irreducible, deg$(P)=n$}\right\}$ and the prime number theorem for polynomials \cite[Theorem 2.2]{Ros} says that $\pi_{A}(n)=\frac{q^{n}}{n}+O\left(\frac{q^{n/2}}{n}\right)$.
\end{theorem}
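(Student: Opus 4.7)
The plan is to follow the Rudnick--Soundararajan strategy used to prove Theorem~\ref{thm:main}, with the family $\mathcal H_{2g+1,q}$ replaced by the family of monic irreducibles $P\in A$ of degree $n$. The structural argument is identical; only the input character-sum estimate must be adapted, passing from averages of $\chi_D(f)$ over squarefree $D$ to averages of $\chi_P(f)$ over primes $P$ of degree $n$.

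The first step is to introduce, for a parameter $N$ of size $\asymp n/k$, the auxiliary Dirichlet polynomial
\begin{equation*}
M(\chi_P) \;=\; \sum_{\substack{f\in A \text{ monic} \\ \deg f \leq N}} \frac{a_k(f)\,\chi_P(f)}{|f|^{1/2}},
\end{equation*}
where $a_k(f)$ are the non-negative coefficients (modelled on $d_k$ restricted to squarefree $f$) used in the proof of Theorem~\ref{thm:main}. H\"older's inequality with exponents $k$ and $k/(k-1)$ yields
\begin{equation*}
\sum_{\deg P = n} L(\tfrac{1}{2},\chi_P)^{k} \;\geq\; \frac{\bigl(\sum_{\deg P = n} L(\tfrac{1}{2},\chi_P)\,M(\chi_P)^{k-1}\bigr)^{k}}{\bigl(\sum_{\deg P = n} M(\chi_P)^{k}\bigr)^{k-1}},
\end{equation*}
which reduces the theorem to a lower bound on the twisted first moment in the numerator and a matching upper bound on the $k$-th moment of $M$ in the denominator. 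Note that because $k$ is even, both $M^{k-1}$ and $M^{k}$ are genuine Dirichlet polynomials in $\chi_P$, which can be expanded and evaluated term-by-term.

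The key input for both moments is an estimate for the prime character sum
\begin{equation*}
\frac{1}{\pi_A(n)}\sum_{\substack{P\in A \\ \deg P = n}}\chi_P(h),
\end{equation*}
for monic $h$ of moderate degree. Function-field quadratic reciprocity converts $\chi_P(h)$ into $\chi_h(P)$ times an explicit sign depending on $\deg h$, and the latter prime sum is handled by the explicit formula for $L(s,\chi_h)$ combined with Weil's theorem \cite{Wei}, giving an asymptotic of the shape
\begin{equation*}
\frac{1}{\pi_A(n)}\sum_{\deg P = n}\chi_P(h) \;=\; \delta_{h=\square}\bigl(1+O(|h|^{-1})\bigr) \;+\; O\!\bigl(\deg(h)\,|h|^{1/2}\,q^{-n/2}\bigr),
\end{equation*}
valid uniformly in $h$. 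After expanding $M(\chi_P)^{k-1}$ and $M(\chi_P)^{k}$ as sums over $(k-1)$- and $k$-tuples of monic polynomials and inserting this estimate, the diagonal contributions (where the product of the $f_i$ is a square) assemble into the expected symplectic main term via exactly the same Euler-product computation as in Theorem~\ref{thm:main}, while the off-diagonal is absorbed by Weil's square-root savings.

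The main obstacle is to keep the Weil error smaller than the main term throughout the expansion: since the tuples in $M^k$ have total degree up to $kN$ and the error scales as $|h|^{1/2} q^{-n/2}$, one needs $kN < n/2$ with a small buffer. This forces $N$ to be taken slightly below $n/(2k)$, rather than up to $g$ as in the hyperelliptic case; however, the loss is harmless for a bound of order $(\log_q|P|)^{k(k+1)/2}$, since the implied constant depends only on $k$. Beyond this single length restriction, the proof is a routine transcription of the argument for Theorem~\ref{thm:main}.
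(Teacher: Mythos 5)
Your overall route is exactly the one the paper intends (it offers no separate proof, only the assertion that the techniques of Theorem \ref{thm:main} carry over, which is what you do): H\"older against an auxiliary Dirichlet polynomial, a prime orthogonality estimate replacing Lemma \ref{lem4}, and the same Euler-product/divisor computation for the diagonal. However, as written there is a genuine gap in the treatment of the numerator (the twisted first moment). First, you never invoke the approximate functional equation (the prime-modulus analogue of Lemma \ref{lem1}); it is indispensable, because $\sum_P L(\tfrac12,\chi_P)M(\chi_P)^{k-1}$ can only be expanded after writing $L(\tfrac12,\chi_P)$ as Dirichlet sums of length about $|P|^{1/2}=q^{n/2}$ (using the full degree-$2g$ polynomial instead does not work: even with the optimal Weil bound the off-diagonal then exceeds the main term). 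Second, your stated key estimate carries a spurious factor $|h|^{1/2}$: the explicit formula plus Weil give $\frac{1}{\pi_A(n)}\sum_{\deg P=n}\chi_P(h)\ll \deg(h)\,q^{-n/2}$ for non-square $h$, with no $|h|^{1/2}$. With the estimate as you wrote it, the $|h|^{\pm 1/2}$ factors cancel against the weights $|h|^{-1/2}$, and the off-diagonal in the numerator, where $h=f_1n_1\cdots n_{k-1}$ has degree up to roughly $n/2+(k-1)N$, contributes about $\pi_A(n)\,n\,q^{(k-1)N}$, which swamps the main term $\asymp \pi_A(n)\,n^{k(k+1)/2}$; your calibration $kN<n/2$ only controls the denominator $\sum_P M(\chi_P)^k$, not this longer sum.

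The repair is standard and consistent with the tools you cite: use reciprocity plus the explicit formula/Weil bound in the sharp form above, and the approximate functional equation to cap the $L$-sum at degree $g\approx n/2$. Then the numerator's off-diagonal is $\ll \pi_A(n)\,n\,q^{-n/4+(k-1)N/2}$ and the denominator's is $\ll \pi_A(n)\,n\,q^{kN-n/2}$, both negligible once $N$ is a suitably small multiple of $n/k$ (mirroring the choice $x=\frac{2(2g)}{15k}$ in the paper), after which the diagonal evaluations proceed exactly as in Section 4 and yield the claimed bound; the $n=2g+2$ case needs only the modified functional equation noted in Remark \ref{rmk1}.
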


\section{Necessary Tools}

In this section we present some auxiliary lemmas that will be used in the proof of the main theorem. We start with:

\begin{lemma}[``Approximate" Functional Equation]
\label{lem1}
Let $D\in\mathcal{H}_{2g+1,q}$. Then $L(s,\chi_{D})$ can be represented as

\begin{equation}
\label{3.1}
L(s,\chi_{D})=\sum_{\substack{f_{1} \ \text{monic} \\ \text{deg}(f_{1})\leq g}}\frac{\chi_{D}(f_{1})}{|f_{1}|^{s}}+(q^{1-2s})^{g}\sum_{\substack{f_{2} \ \text{monic} \\ \text{deg}(f_{2})\leq g-1}}\frac{\chi_{D}(f_{2})}{|f_{2}|^{1-s}}.
\end{equation}
\end{lemma}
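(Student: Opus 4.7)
The plan is to derive the approximate functional equation directly from the polynomial representation \eqref{2.7} combined with Weil's functional equation \eqref{2.8}. Setting
\[
a_{n} := \sum_{\substack{f \text{ monic} \\ \deg(f) = n}} \chi_{D}(f),
\]
we have $L(s,\chi_{D}) = \sum_{n=0}^{2g} a_{n} q^{-ns}$, a polynomial of degree $2g$ in $q^{-s}$ with exactly $2g+1$ coefficients.

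First I would extract the symmetry among the coefficients $a_{n}$ implied by \eqref{2.8}. Substituting the series into both sides of $L(s,\chi_{D}) = (q^{1-2s})^{g} L(1-s,\chi_{D})$ gives
\[
\sum_{n=0}^{2g} a_{n} q^{-ns} = q^{g} q^{-2gs} \sum_{n=0}^{2g} a_{n} q^{-n(1-s)} = \sum_{n=0}^{2g} a_{n} q^{g-n} q^{-(2g-n)s}.
\]
Reindexing the right-hand side by $m = 2g-n$ and comparing coefficients of $q^{-ms}$ yields the coefficient-level functional equation
\[
a_{2g-m} = a_{m} \, q^{g-m}, \qquad 0 \leq m \leq 2g.
\]

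Next I would split the polynomial at the midpoint $n = g$, writing
\[
L(s,\chi_{D}) = \sum_{n=0}^{g} a_{n} q^{-ns} + \sum_{n=g+1}^{2g} a_{n} q^{-ns}.
\]
The first sum is already equal to $\sum_{\deg(f_{1}) \leq g} \chi_{D}(f_{1})/|f_{1}|^{s}$, which is the first term on the right of \eqref{3.1}. For the second sum, substituting $m = 2g - n$ (so $m$ runs over $0,1,\ldots,g-1$) and invoking the coefficient relation produces
\[
\sum_{n=g+1}^{2g} a_{n} q^{-ns} = \sum_{m=0}^{g-1} a_{m} q^{g-m} q^{-(2g-m)s} = (q^{1-2s})^{g} \sum_{m=0}^{g-1} \frac{a_{m}}{q^{m(1-s)}},
\]
which matches the second term of \eqref{3.1} once one recognises $a_{m}/q^{m(1-s)} = \sum_{\deg(f_{2})=m} \chi_{D}(f_{2})/|f_{2}|^{1-s}$.

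I do not expect any substantive obstacle: the argument is bookkeeping powered entirely by \eqref{2.8}. The only subtle point to monitor is the asymmetric split into $g+1$ ``short'' terms (indices $0 \leq n \leq g$) and $g$ ``dualised'' terms (indices $g+1 \leq n \leq 2g$), which is exactly what forces the second sum in \eqref{3.1} to range over $\deg(f_{2}) \leq g-1$ rather than $\leq g$.
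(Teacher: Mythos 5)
Your proof is correct, and the coefficient relation $a_{2g-m}=a_m q^{g-m}$ together with the split at degree $g$ is exactly the standard derivation. The paper itself offers no argument here, deferring entirely to \cite[Lemma 3.3]{And-Kea}, and the proof given there is essentially the one you wrote: dualise the terms of degree $n>g$ via the functional equation \eqref{2.8}, which is what produces the shorter range $\deg(f_{2})\leq g-1$ in the second sum. So your proposal is a correct, self-contained version of the cited argument rather than a genuinely different route.
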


\begin{proof}
The proof of this Lemma can be found in \cite[Lemma 3.3]{And-Kea}.
\end{proof}

The following lemma is the function field analogue of P\'{o}lya--Vinogradov inequality for character sums.

\begin{lemma}[P\'{o}lya--Vinogradov inequality for $\mathbb{F}_{q}(T)$]
\label{lem2}
Let $\chi$ be a non--principal Dirichlet character modulo $Q\in\mathbb{F}_{q}[T]$ such that $\text{deg}(Q)$ is odd. Then we have,

\begin{equation}
\label{3.2}
\sum_{\text{deg}(f)=x}\chi(f)\ll|Q|^{1/2}.
\end{equation}
\end{lemma}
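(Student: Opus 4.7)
My plan is to exploit the fact that, unlike in the classical setting (where bounding character sums requires delicate oscillation arguments with Gauss sums), over $\mathbb{F}_q[T]$ the Dirichlet $L$-function attached to a non-principal character is automatically a polynomial in $u = q^{-s}$; this built-in compactness removes even the logarithmic factor present in the classical P\'olya--Vinogradov inequality. Set $d = \deg Q$ and write
$$L(u,\chi) = \sum_{n \geq 0} A_n(\chi)\,u^n, \qquad A_n(\chi) := \sum_{\substack{f \text{ monic} \\ \deg f = n}} \chi(f),$$
so that the target quantity is precisely $A_x(\chi)$.

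The first step is to show that $L(u,\chi)$ is a polynomial of degree $\leq d-1$. For $n \geq d$, grouping the monic $f$ of degree $n$ by residue class modulo $Q$, each class is hit the same number of times $q^{n-d}$, which yields $A_n(\chi) = q^{n-d} \sum_{a \bmod Q}\chi(a) = 0$ since $\chi$ is non-principal. In particular $A_x(\chi) = 0$ already for $x \geq d$, and the claim is trivial in that range.

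Next, assume that $\chi$ is primitive (the non-primitive case factors through the primitive character of its conductor via $L(u,\chi) = L(u,\chi^*)\prod_{P\mid Q,\,P\nmid Q^*}(1-\chi^*(P)u^{\deg P})$, altering the bound only by $O(1)$). I would apply the functional equation
$$L(u, \chi) = W(\chi)(\sqrt{q}\,u)^{d-1} L\!\left(\tfrac{1}{qu},\bar\chi\right), \qquad |W(\chi)| = 1,$$
whose coefficient-by-coefficient comparison yields $A_x(\chi) = W(\chi)\,q^{\,x-(d-1)/2}\,A_{d-1-x}(\bar\chi)$. Combined with the trivial bound $|A_n(\chi)|\leq q^n$ (there are exactly $q^n$ monic polynomials of degree $n$), this gives the claim in two symmetric halves: if $x \leq (d-1)/2$, the trivial bound directly gives $|A_x(\chi)| \leq q^x \leq q^{(d-1)/2}$; if $x > (d-1)/2$, the functional equation gives $|A_x(\chi)| = q^{\,x-(d-1)/2}|A_{d-1-x}(\bar\chi)| \leq q^{\,x-(d-1)/2}\cdot q^{d-1-x} = q^{(d-1)/2}$. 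Either way, $|A_x(\chi)| \leq q^{(d-1)/2} \leq |Q|^{1/2}$.

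The main obstacle, as I see it, is really a careful bookkeeping issue rather than any conceptual difficulty: one must verify that the functional equation takes exactly the clean symmetric form written above, with $L(u,\chi)$ of full degree $d-1$. This is where the odd-degree hypothesis on $Q$ enters --- it rules out the "trivial zero" at $u=1$ that would appear if $\chi$ were trivial on the constants $\mathbb{F}_q^\times$, and therefore guarantees that the coefficient matching pairs $A_x$ with $A_{d-1-x}$ symmetrically about the midpoint $(d-1)/2$, giving the same exponent $(d-1)/2$ on both sides of the argument. Once this is set up correctly, everything else is straightforward.
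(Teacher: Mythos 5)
Your strategy (vanishing of the sum for $x\geq\deg Q$, then functional equation plus the trivial bound $|A_n|\leq q^n$ in two symmetric ranges) is viable and, incidentally, quite different from the paper, which gives no argument at all and simply cites Hsu's Proposition 2.1; your route would make the lemma self-contained and does not even need the Riemann Hypothesis for curves. However, there is a genuine error at the point you yourself flag as the crux: you claim that the hypothesis ``$\deg Q$ odd'' rules out characters that are trivial on $\mathbb{F}_q^{\times}$ and hence rules out the trivial zero at $u=1$. The parity of $\deg Q$ has nothing to do with the parity of the character: the ``even'' characters (those killing $\mathbb{F}_q^{\times}$) form a subgroup of index exactly $q-1$ in the dual of $(\mathbb{F}_q[T]/(Q))^{\times}$, so non-principal even characters exist for every modulus of degree $\geq 2$, odd degree included (e.g.\ $Q=T^{3}$ and any nontrivial character of $(\mathbb{F}_q[T]/(T^{3}))^{\times}$ trivial on constants). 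For a primitive even non-principal $\chi$ the clean symmetric functional equation you wrote for $L(u,\chi)$, pairing $A_x(\chi)$ with $A_{d-1-x}(\bar\chi)$ with a unimodular constant, is false: already for $Q=T^{2}$ and $\chi$ even nontrivial one computes $L(u,\chi)=1-u$, exhibiting the trivial zero at $u=1$; in general the symmetric equation holds only for $L^{*}(u,\chi)=L(u,\chi)/(1-u)$, a polynomial of degree $d-2$. So as written your proof covers only odd primitive characters, and the reason offered for discarding the remaining case is incorrect.

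The good news is that the gap is repairable inside your own framework, so this is a fixable write-up rather than a wrong approach: for even primitive $\chi$ put $L(u,\chi)=(1-u)L^{*}(u,\chi)$ and note that the coefficients $B_n$ of $L^{*}$ are the partial sums $\sum_{m\leq n}A_m(\chi)$, hence obey the trivial bound $|B_n|\leq\tfrac{q}{q-1}q^{n}$; running your two-range argument on $L^{*}$ (with $d-2$ in place of $d-1$) gives $|B_n|\ll q^{(d-2)/2}$ uniformly, and then $|A_x(\chi)|=|B_x-B_{x-1}|\ll q^{(d-2)/2}\leq|Q|^{1/2}$. Two smaller points you should also tighten: in the reduction to primitive characters the omitted Euler factors multiply the bound by at most $2^{\omega(Q/Q^{*})}$, which is $\ll_{\varepsilon}|Q|^{\varepsilon}$ by the divisor bound rather than $O(1)$ (harmless, since the conductor drop leaves more than enough room, but it needs to be said); and you should state explicitly that the unimodular root number and the exact shape of the functional equation you invoke are those for \emph{odd primitive} characters, since that is precisely where the even/odd distinction, not the parity of $\deg Q$, enters.
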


\begin{proof}
The proof of this Lemma can be found in \cite[Proposition 2.1]{Hsu}.
\end{proof}

The next lemma is taken from Andrade-Keating \cite[Proposition 5.2]{And-Kea} and it is about counting the number of square--free polynomials coprime to a fixed monic polynomial.

\begin{lemma}
\label{lem3}
Let $f\in A$ be a fixed monic polynomial. Then for all $\varepsilon>0$ we have that

\begin{equation}
\label{3.3}
\sum_{\substack{D\in\mathcal{H}_{2g+1,q} \\ (D,f)=1}}1=\frac{|D|}{\zeta_{A}(2)}\prod_{\substack{P \ \text{monic} \\ \text{irreducible} \\ P\mid f}}\left(\frac{|P|}{|P|+1}\right)+O\left(|D|^{\tfrac{1}{2}}|f|^{\varepsilon}\right).
\end{equation}
\end{lemma}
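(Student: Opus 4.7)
\noindent\textbf{Proof plan for Lemma~\ref{lem3}.} The plan is to count by a standard double Möbius inversion over $A=\mathbb{F}_q[T]$: one inversion to remove the squarefree condition, a second to remove the coprimality condition, and then to assemble the resulting Euler factors and control the error terms carefully enough to obtain an $|f|^{\varepsilon}$ dependence rather than $|f|^{1/2}$.

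First I would start from the identity $\mu_A^2(D)=\sum_{B^2\mid D}\mu_A(B)$ (sum over monic divisors), substitute $D=B^2C$ with $B,C$ monic, and use that $(D,f)=1$ splits as $(B,f)=(C,f)=1$. Interchanging the two sums gives
\begin{equation*}
N(f):=\sum_{\substack{D\in\mathcal{H}_{2g+1,q}\\(D,f)=1}}1
=\sum_{\substack{B\text{ monic},\,(B,f)=1\\\deg B\le g}}\mu_A(B)\,M\bigl(2g+1-2\deg B,\,f\bigr),
\end{equation*}
where $M(m,f)$ denotes the number of monic polynomials of degree $m$ coprime to $f$. A further Möbius inversion gives $M(m,f)=\sum_{E\mid f,\,\deg E\le m}\mu_A(E)\,q^{m-\deg E}$; completing this over all divisors of $f$ and bounding the truncated tail yields the uniform estimate
\begin{equation*}
M(m,f)=q^{m}\,\frac{\phi_A(f)}{|f|}+O\bigl(d_A(f)\bigr)\qquad\text{for all }m\ge 0,
\end{equation*}
where $\phi_A(f)=|f|\prod_{P\mid f}(1-|P|^{-1})$ and $d_A(f)$ is the number of monic divisors of $f$.

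Substituting this estimate back and again completing the outer Möbius sum to run over all monic $B$ with $(B,f)=1$, I would identify the main term via the Euler product
\begin{equation*}
\sum_{(B,f)=1}\frac{\mu_A(B)}{|B|^{2}}=\prod_{P\nmid f}\bigl(1-|P|^{-2}\bigr)=\frac{1}{\zeta_A(2)}\prod_{P\mid f}\bigl(1-|P|^{-2}\bigr)^{-1}.
\end{equation*}
Combining with the prefactor $\phi_A(f)/|f|$ and using the simplification $(1-|P|^{-1})/(1-|P|^{-2})=|P|/(|P|+1)$ collapses the answer to the claimed $\frac{|D|}{\zeta_A(2)}\prod_{P\mid f}\frac{|P|}{|P|+1}$. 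The two remaining errors are then: (a) the tail of the completed Möbius sum, bounded by $q^{2g+1}\sum_{n>g}q^{-n}\ll q^{g}\ll |D|^{1/2}$; and (b) the $O(d_A(f))$ per-$B$ error, summed over squarefree monic $B$ with $\deg B\le g$, contributing $\ll d_A(f)\cdot q^{g}\ll |f|^{\varepsilon}|D|^{1/2}$ via the standard polynomial divisor bound $d_A(f)\ll_\varepsilon |f|^{\varepsilon}$.

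The main point requiring care — the obstacle to getting the sharp $|f|^\varepsilon$ rather than $|f|^{1/2}$ in the error — is the short-$m$ range $m<\deg f$ in the evaluation of $M(m,f)$: the crude bound $M(m,f)\le q^m$ together with summing $\mu_A(B)$ trivially over $\deg B$ close to $g$ would cost a factor $|f|^{1/2}$ and spoil the estimate. Avoiding this requires the uniform approximation $M(m,f)=q^{m}\phi_A(f)/|f|+O(d_A(f))$ which I derived from the completed inner Möbius sum; the $O(d_A(f))$ error is what ultimately delivers the $|f|^{\varepsilon}$ factor, so securing (and not squandering) this uniform estimate is the crux of the argument.
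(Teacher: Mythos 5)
Your argument is correct and complete. Note that the paper does not actually prove Lemma~\ref{lem3}: it simply cites \cite[Proposition 5.2]{And-Kea}, so there is no in-paper argument to compare against; your double M\"{o}bius inversion (one inversion to detect squarefreeness, one to impose coprimality with $f$) is the standard way this count is established and is essentially the argument of the cited reference. The two points that genuinely need care both check out. First, the uniform estimate $M(m,f)=q^{m}\phi_A(f)/|f|+O(d_A(f))$ is valid for every $m\ge 0$, since each divisor $E\mid f$ with $\deg E>m$ that you add back in contributes at most $q^{m-\deg E}<1$ in absolute value, so the completion costs at most $d_A(f)$; this is exactly what rescues the range $m<\deg f$ that a crude bound $M(m,f)\le q^m$ would ruin. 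Second, the polynomial divisor bound $d_A(f)\ll_{\varepsilon,q}|f|^{\varepsilon}$ holds by the usual argument (only the finitely many irreducibles with $|P|^{\varepsilon}<2$ can make a factor $(e+1)|P|^{-\varepsilon e}$ exceed $1$, and each such factor is bounded in terms of $\varepsilon$ and $q$). The main term assembles correctly because $(1-|P|^{-1})(1-|P|^{-2})^{-1}=|P|/(|P|+1)$, the tail of the completed sum over $B$ is $\ll q^{2g+1}\sum_{n>g}q^{-n}\ll q^{g}\ll|D|^{1/2}$, and the per-$B$ error $O(d_A(f))$ summed over the $O(q^{g})$ admissible $B$ gives $O(|D|^{1/2}|f|^{\varepsilon})$, matching the claimed error term.
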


\section{Proof of Theorem \ref{thm:main}}

In this section we prove Theorem \ref{thm:main}.

Let $k$ be a given even number, and set $x=\frac{2(2g)}{15k}$. We define

\begin{equation}
\label{4.1}
A(D)=\sum_{\text{deg}(n)\leq x}\frac{\chi_{D}(n)}{\sqrt{|n|}},
\end{equation}
and let 

\begin{equation}
\label{4.2}
S_{1}:=\sum_{D\in\mathcal{H}_{2g+1,q}}L(\tfrac{1}{2},\chi_{D})A(D)^{k-1},
\end{equation}
and

\begin{equation}
\label{4.3}
S_{2}:=\sum_{D\in\mathcal{H}_{2g+1,q}}A(D)^{k}.
\end{equation}

An application of Triangle inequality followed by H\"{o}lder's inequality gives us that,

\begin{align}
\label{4.4}
& \Bigg|\sum_{D\in\mathcal{H}_{2g+1,q}}L(\tfrac{1}{2},\chi_{D})A(D)^{k-1}\Bigg|\leq \sum_{D\in\mathcal{H}_{2g+1,q}}|L(\tfrac{1}{2},\chi_{D})||A(D)|^{k-1} \nonumber \\
 &\leq\left(\sum_{D\in\mathcal{H}_{2g+1,q}}L(\tfrac{1}{2},\chi_{D})^{k}\right)^{1/k}\left(\sum_{D\in\mathcal{H}_{2g+1,q}}A(D)^{k}\right)^{\tfrac{k-1}{k}}.
\end{align}
From \eqref{4.4} we have

\begin{align}
\label{4.5}
\sum_{D\in\mathcal{H}_{2g+1,q}}L(\tfrac{1}{2},\chi_{D})^{k}&\geq\frac{\left(\sum_{D\in\mathcal{H}_{2g+1,q}}L(\tfrac{1}{2},\chi_{D})A(D)^{k-1}\right)^{k}}{\left(\sum_{D\in\mathcal{H}_{2g+1,q}}A(D)^{k}\right)^{k-1}}\nonumber \\
&=\frac{S_{1}^{k}}{S_{2}^{k-1}}.
\end{align}

Hence from \eqref{4.5} we can see that to prove Theorem \ref{thm:main} we only need to give satisfactory estimates for $S_{1}$ and $S_{2}$. We start with $S_{2}$.

\subsection{\textbf{Estimating $S_{2}$}}

We have that

\begin{equation}
\label{4.6}
A(D)^{k}=\sum_{\substack{n_{1},\ldots,n_{k} \\ \text{deg}(n_{j})\leq x \\ j=1,\ldots,k}}\frac{\chi_{D}(n_{1}\ldots n_{k})}{\sqrt{|n_{1}|\ldots|n_{k}|}}.
\end{equation}
So,

\begin{align}
\label{4.7}
S_{2}&=\sum_{D\in\mathcal{H}_{2g+1,q}}A(D)^{k}\nonumber\\
&=\sum_{\substack{n_{1},\ldots,n_{k} \\ \text{deg}(n_{j})\leq x \\ j=1,\ldots,k}}\frac{1}{\sqrt{|n_{1}|\ldots|n_{k}|}}\sum_{D\in\mathcal{H}_{2g+1,q}}\left(\frac{D}{n_{1}\ldots n_{k}}\right).
\end{align}

At this stage we need an auxiliary Lemma. It is called orthogonal relations for quadratic characters and it has appeared in a different form in \cite{And-Kea,And-Kea1,Fai-Rud}.

\begin{lemma}
\label{lem4}
If $n\in A$ is not a perfect square then

\begin{equation}
\label{4.8}
\sum_{\substack{D\in\mathcal{H}_{2g+1,q} \\ n\neq\square}}\left(\frac{D}{n}\right)\ll|D|^{1/2}|n|^{1/4}.
\end{equation}
And if $n\in A$ is a perfect square then

\begin{equation}
\label{4.9}
\sum_{\substack{D\in\mathcal{H}_{2g+1,q} \\ n=\square}}\left(\frac{D}{n}\right)=\frac{|D|}{\zeta_{A}(2)}\prod_{\substack{P \ \text{monic} \\ \text{irreducible} \\ P\mid n}}\left(\frac{|P|}{|P|+1}\right)+O\left(|D|^{\tfrac{1}{2}}|n|^{\varepsilon}\right),
\end{equation}
for any $\varepsilon>0$.
\end{lemma}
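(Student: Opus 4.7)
The two cases are proved separately, with the square case essentially immediate and the non-square case requiring the character sum machinery.

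\emph{Square case.} Writing $n = m^2$, we have $\chi_D(n) = \chi_D(m)^2$, which equals $1$ when $(D, m) = 1$ and $0$ otherwise. Since $m$ and $n$ have the same set of prime divisors, the sum reduces to $\#\{D \in \mathcal{H}_{2g+1,q} : (D, m) = 1\}$, and this is exactly what Lemma \ref{lem3} evaluates; note that $|m|^{\varepsilon} \leq |n|^{\varepsilon}$ is absorbed into the error term, and the Euler product over $P \mid m$ coincides with that over $P \mid n$.

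\emph{Non-square case.} First, apply quadratic reciprocity in $\mathbb{F}_q[T]$ to rewrite $(D/n) = \epsilon_n (n^*/D)$, where $n^*$ denotes the product of prime divisors of $n$ appearing with odd multiplicity and $\epsilon_n \in \{\pm 1\}$ depends only on $n$ and $q$ (since $\deg D = 2g+1$ is fixed). Because $n$ is not a square, $n^* \neq 1$, so $\chi := (n^*/\cdot)$ is a non-principal Dirichlet character modulo $n^*$. Next, drop the squarefree constraint on $D$ via $\mu^2(D) = \sum_{L^2 \mid D} \mu(L)$ and exchange the order of summation; writing $D = L^2 B$ gives
\begin{equation*}
\sum_{D \in \mathcal{H}_{2g+1,q}} \chi_D(n) \;=\; \epsilon_n \sum_{\substack{L \text{ monic} \\ \deg L \leq g,\, (L,n)=1}} \mu(L) \sum_{\substack{B \text{ monic} \\ \deg B = 2g+1-2\deg L}} \chi(B).
\end{equation*}
Estimating each inner character sum with Lemma \ref{lem2} (Pólya–Vinogradov) and then summing over $L$ yields the claimed bound.

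\emph{Main obstacle.} Matching the exponent $|n|^{1/4}$ rather than the naive $|n|^{1/2}$ is the delicate step. The key is to exploit that $L(u, \chi)$ is a polynomial of degree $\deg n^* - 1$, so the inner sum vanishes for $\deg B \geq \deg n^*$ and, by the Weil-type bound, is $O\!\bigl(\binom{\deg n^* - 1}{e} q^{e/2}\bigr)$ for smaller $e$. This restricts the effective range of $\deg L$ and rebalances the Möbius-weighted contributions so that the final estimate collapses to $|D|^{1/2} |n|^{1/4}$ instead of the coarser $|D|^{1/2} |n|^{1/2}$. A minor technical point is that Lemma \ref{lem2} as stated requires $\deg n^*$ to be odd; when $\deg n^*$ is even, multiplying $n^*$ by an auxiliary linear factor (or splitting according to parity) restores the hypothesis at the cost of an $O(1)$ factor absorbed into the implicit constant.
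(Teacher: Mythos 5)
Your square case is exactly the paper's argument: $\chi_{D}(m^{2})$ is the indicator of $(D,m)=1$ and Lemma \ref{lem3} finishes it. In the non-square case your skeleton (remove the square-free condition on $D$ by M\"obius, write $D=L^{2}B$, and bound the inner sum over $B$ of fixed degree by Lemma \ref{lem2}) is also the paper's, except that the reciprocity detour is unnecessary --- $D\mapsto\left(\frac{D}{n}\right)$ is already a non-principal character modulo $n$ when $n\neq\square$ --- and your identity $\left(\frac{D}{n}\right)=\epsilon_{n}\left(\frac{n^{*}}{D}\right)$ silently drops the coprimality with the square part: writing $n=n^{*}h^{2}$, a square-free $D$ sharing a factor with $h$ but not with $n^{*}$ makes the left side $0$ while the right side is $\pm1$. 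Those points are repairable.

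The genuine gap is the step that produces the exponent $1/4$. The paper obtains it by playing the P\'olya--Vinogradov bound $\sum_{\deg(B)=2g+1-2\alpha}\left(\frac{B}{n}\right)\ll|n|^{1/2}$ against the \emph{trivial} bound $q^{2g+1-2\alpha}=|D|/|A|^{2}$, using the former for $\alpha\leq g-\deg(n)/4$ and the latter for larger $\alpha$; each range then contributes $\ll|D|^{1/2}|n|^{1/4}$. You never invoke the trivial bound. Instead you propose vanishing of the inner sum for $\deg(B)\geq\deg(n^{*})$ together with the Weil-type coefficient bound $\binom{\deg(n^{*})-1}{e}q^{e/2}$. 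But summing your estimate over the surviving range, with $e=2g+1-2\deg(L)$, each term is $q^{\deg(L)}\binom{\deg(n^{*})-1}{e}q^{e/2}=\binom{\deg(n^{*})-1}{e}q^{(2g+1)/2}$, so the total is of size $|D|^{1/2}\sum_{e}\binom{\deg(n^{*})-1}{e}\approx|D|^{1/2}2^{\deg(n^{*})}$; if you use P\'olya--Vinogradov instead of Weil on the truncated range you only get $|D|^{1/2}|n^{*}|^{1/2}$. Neither is $\ll|D|^{1/2}|n|^{1/4}$ in general: take $n$ square-free (so $n^{*}=n$) and $q<16$, and then $2^{\deg(n)}$, respectively $|n|^{1/2}$, exceeds $|n|^{1/4}$ by a factor that grows with $\deg(n)$, and the discrepancy cannot be absorbed into the implied constant. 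So the claimed ``rebalancing'' does not deliver the stated bound; the missing ingredient is the trivial estimate for the short inner sums (large $\deg(L)$) and the explicit split of the $\deg(L)$ range at $g-\deg(n)/4$, which is how \eqref{4.14} is actually obtained.
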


\begin{remark}
Equation \eqref{4.8} can be seen as an improvement on the estimate given in \cite[Lemma 3.1]{Fai-Rud}. And the same equation \eqref{4.8} can be used to improve the error term in the first moment of quadratic Dirichlet $L$--functions over function fields as given in \cite[Theorem 2.1]{And-Kea}.
\end{remark}

\begin{proof}
If $n=\square$, then

\begin{equation}
\label{4.10}
\sum_{\substack{D\in\mathcal{H}_{2g+1,q} \\ n=\square}}\left(\frac{D}{n}\right)=\sum_{\substack{D\in\mathcal{H}_{2g+1,q} \\ (D,n)=1}}1.
\end{equation}
By invoking Lemma \ref{lem3} we establish equation \eqref{4.9}.

For \eqref{4.8} we write

\begin{align}
\label{4.11}
&\sum_{\substack{D\in\mathcal{H}_{2g+1,q}}}\left(\frac{D}{n}\right)=\sum_{2\alpha+\beta=2g+1}\sum_{\text{deg}(B)=\beta}\sum_{\text{deg}(A)=\alpha}\mu(A)\left(\frac{A^{2}B}{n}\right)\nonumber\\
&=\sum_{0\leq\alpha\leq g}\sum_{\text{deg}(A)=\alpha}\mu(A)\left(\frac{A^{2}}{n}\right)\sum_{\text{deg}(B)=2g+1-2\alpha}\left(\frac{B}{n}\right)\nonumber\\
&\leq\sum_{0\leq\alpha\leq g}\sum_{\text{deg}(A)=\alpha}\sum_{\text{deg}(B)=2g+1-2\alpha}\left(\frac{B}{n}\right).
\end{align}
If $n\neq\square$ then $\sum_{\text{deg}(B)=2g+1-2\alpha}\left(\frac{B}{n}\right)$ is a character sum to a non--principal character modulo $n$. So using Lemma \ref{lem2} we have that

\begin{equation}
\label{4.12}
\sum_{\text{deg}(B)=2g+1-2\alpha}\left(\frac{B}{n}\right)\ll|n|^{1/2}.
\end{equation}
Further we can estimate trivially the non--principal character sum by

\begin{equation}
\label{4.13}
\sum_{\text{deg}(B)=2g+1-2\alpha}\left(\frac{B}{n}\right)\ll\frac{|D|}{|A|^{2}}=q^{2g+1-2\alpha}.
\end{equation}
Thus, if $n\neq\square$, we obtain that 

\begin{align}
\label{4.14}
\sum_{D\in\mathcal{H}_{2g+1,q}}\left(\frac{D}{n}\right)&\ll\sum_{0\leq\alpha\leq g}\sum_{\text{deg}(A)=\alpha}\text{min}\left(|n|^{1/2},\frac{|D|}{|A|^{2}}\right)\nonumber \\
&\ll|D|^{\tfrac{1}{2}}|n|^{\tfrac{1}{4}},
\end{align}
upon using the first bound \eqref{4.12} for $\alpha\leq g-\frac{\text{deg}(n)}{4}$ and the second bound \eqref{4.13} for larger $\alpha$. And this concludes the proof of the lemma.
\end{proof}

Using Lemma \ref{lem4} in \eqref{4.7} we obtain that

\begin{align}
\label{4.15}
S_{2}&=\sum_{\substack{n_{1},\ldots,n_{k} \\ \text{deg}(n_{j})\leq x \\ j=1,\ldots,k \\ n_{1}\ldots n_{k}=\square}}\frac{1}{\sqrt{|n_{1}|\ldots|n_{k}|}}\Bigg(\frac{|D|}{\zeta_{A}(2)}\prod_{\substack{P \ \text{monic} \\ \text{irreducible} \\ P\mid n_{1}\ldots n_{k}}}\left(\frac{|P|}{|P|+1}\right)\Bigg)\nonumber\\
&+\sum_{\substack{n_{1},\ldots,n_{k} \\ \text{deg}(n_{j})\leq x \\ j=1,\ldots,k \\ n_{1}\ldots n_{k}=\square}}\frac{1}{\sqrt{|n_{1}|\ldots|n_{k}|}}O\left(|D|^{\tfrac{1}{2}}|n_{1}\ldots n_{k}|^{\varepsilon}\right)\nonumber\\
&+\sum_{\substack{n_{1},\ldots,n_{k} \\ \text{deg}(n_{j})\leq x \\ j=1,\ldots,k \\ n_{1}\ldots n_{k}\neq\square}}\frac{1}{\sqrt{|n_{1}|\ldots|n_{k}|}}O\left(|D|^{\tfrac{1}{2}}|n_{1}\ldots n_{k}|^{\tfrac{1}{4}}\right).
\end{align}
After some arithmetic manipulations with the $O$--terms we get that

\begin{align}
\label{4.16}
S_{2}&=\frac{|D|}{\zeta_{A}(2)}\sum_{\substack{n_{1},\ldots,n_{k} \\ \text{deg}(n_{j})\leq x \\ j=1,\ldots,k \\ n_{1}\ldots n_{k}=\square}}\frac{1}{\sqrt{|n_{1}|\ldots|n_{k}|}}\prod_{\substack{P \ \text{monic} \\ \text{irreducible} \\ P\mid n_{1}\ldots n_{k}}}\left(\frac{|P|}{|P|+1}\right)\nonumber \\
&+O\left(|D|^{\tfrac{1}{2}}q^{\left(\tfrac{3}{4}+\varepsilon\right)x}\right).
\end{align}
Since $x=\frac{2(2g)}{15k}$, the error term above is $\ll|D|^{\tfrac{3}{5}}$. So,

\begin{equation}
\label{4.17}
S_{2}=\frac{|D|}{\zeta_{A}(2)}\sum_{\substack{n_{1},\ldots,n_{k} \\ \text{deg}(n_{j})\leq x \\ j=1,\ldots,k \\ n_{1}\ldots n_{k}=\square}}\frac{1}{\sqrt{|n_{1}|\ldots|n_{k}|}}\prod_{\substack{P \ \text{monic} \\ \text{irreducible} \\ P\mid n_{1}\ldots n_{k}}}\left(\frac{|P|}{|P|+1}\right)+O\left(|D|^{\tfrac{3}{5}}\right).
\end{equation}

Writing $n_{1}\ldots n_{k}=m^{2}$ we see that

\begin{align}
\label{4.18}
&\sum_{\substack{m^{2} \ \text{monic} \\ \text{deg}(m^{2})\leq x}}\frac{d_{k}(m^{2})}{|m|}\prod_{\substack{P \ \text{monic} \\ \text{irreducible} \\ P\mid m}}\left(\frac{|P|}{|P|+1}\right)\nonumber \\
&\leq\sum_{\substack{n_{1},\ldots,n_{k} \\ \text{deg}(n_{j})\leq x \\ j=1,\ldots,k \\ n_{1}\ldots n_{k}=\square=m^{2}}}\frac{1}{\sqrt{|n_{1}|\ldots|n_{k}|}}\prod_{\substack{P \ \text{monic} \\ \text{irreducible} \\ P\mid n_{1}\ldots n_{k}}}\left(\frac{|P|}{|P|+1}\right)\nonumber\\
&\leq\sum_{\substack{m^{2} \ \text{monic} \\ \text{deg}(m^{2})\leq kx}}\frac{d_{k}(m^{2})}{|m|}\prod_{\substack{P \ \text{monic} \\ \text{irreducible} \\ P\mid m}}\left(\frac{|P|}{|P|+1}\right),
\end{align} 
where $d_{k}(m)$ represents the number of ways to write the monic polynomial $m$ as a product of $k$ factors.


We need to obtain an estimate for

\begin{equation}
\label{4.19}
\sum_{\substack{m \ \text{monic} \\ \text{deg}(m)=x}}d_{k}(m^{2})a_{m},
\end{equation}
where $a_{m}=\prod_{\substack{P \ \text{monic} \\ \text{irreducible} \\ P\mid m}}\left(\frac{|P|}{|P|+1}\right)$.

To obtain the desired estimate we consider the corresponding Dirichlet series

\begin{equation}
\label{4.20}
\zeta_{f}(s)=\sum_{m \ \text{monic}}\frac{d_{k}(m^{2})a_{m}}{|m|^{s}}=\sum_{n=0}^{\infty}\sum_{\text{deg}(m)=x}d_{k}(m^{2})a_{m}u^{x}=Z_{f}(u),
\end{equation}
with $u=q^{-s}$. Writing the above as an Euler product

\begin{equation}
\label{4.21}
\zeta_{f}(s)=\prod_{\substack{P \ \text{monic}  \\ \text{irreducible}}}\left(1+\frac{d_{k}(P^{2})a_{P}}{|P|^{s}}+\frac{d_{k}(P^{4})a_{P^{2}}}{|P|^{2s}}+\cdots\right),
\end{equation}
we can identify the poles of $\zeta_{f}(s)$. Similar calculations carried out in the classical case by Soundararajan and Rudnick \cite[page 9]{Ru-So2} and Selberg \cite[Theorem 2]{Sel}, and for function fields by Andrade and Keating \cite[Section 4.3]{And-Kea1} shows us that $\zeta_{f}(s)$ has a pole at $s=1$ of order $\frac{k(k+1)}{2}$. Therefore we can write

\begin{align}
\label{4.22}
\zeta_{f}(s)&=\prod_{\substack{P \ \text{monic}  \\ \text{irreducible}}}\left(1-\frac{1}{|P|^{s}}\right)^{-\tfrac{k(k+1)}{2}}\nonumber \\
&\times\prod_{\substack{P \ \text{monic}  \\ \text{irreducible}}}\left(1+\left(\frac{|P|}{|P|+1}\sum_{j=1}^{\infty}\frac{d_{k}(P^{2j})}{|P|^{js}}\right)\right)\left(1-\frac{1}{|P|^{s}}\right)^{\tfrac{k(k+1)}{2}},
\end{align}
where the first product has a pole at $s=1$ of order $\frac{k(k+1)}{2}$ and the second product above \eqref{4.22} is convergent for $\text{Re}(s)>1$ and holomorphic in $\{s\in B \ | \ \text{Re}(s)=1\}$ with 

\begin{equation}
\label{4.23}
B=\left\{s\in\mathbb{C} \ | \ -\frac{\pi i}{\log(q)}\leq\mathfrak{I}(s)<\frac{\pi i}{\log(q)}\right\}.
\end{equation}
Thus we can use Theorem 17.4 from \cite{Ros} to obtain the desired estimate. But we sketch below how this can be done. A standard contour integration (Cauchy's theorem)

\begin{equation}
\label{4.24}
\frac{1}{2\pi i}\oint_{C_{\varepsilon}+C}{\frac{Z_{f}(u)}{u^{x+1}}du}=\sum\text{Res}(Z_{f}(u)u^{-x-1}),
\end{equation}
where $C$ is the boundary of the disc $\{u\in\mathbb{C} \ | \ |u|\leq q^{-\delta}\}$ for some $\delta<1$ and $C_{\varepsilon}$ a small circle about $s=0$ oriented clockwise. There is only one pole in the integration region $C_{\varepsilon}+C$ and it is located at $u=q^{-1}$ as can be seen from \eqref{4.22}. To find the residue there, we expand both $Z_{f}(u)$ and $u^{-x-1}$ in Laurent series about $u=q^{-1}$, multiply the results together, and pick out the coefficient of $(u-q^{-1})^{-1}$. After this residue calculation we obtain that

\begin{equation}
\label{4.25}
\sum_{\substack{m \ \text{monic} \\ \text{deg}(m)=x}}d_{k}(m^{2})a_{m}\sim C(k)q^{x}x^{\tfrac{k(k+1)}{2}-1},
\end{equation}
for a positive constant $C(k)$ explicitly given by

\begin{equation}
\label{4.26}
C(k)=\frac{\log(q)^{\tfrac{k(k+1)}{2}}}{\left(\tfrac{k(k+1)}{2}-1\right)!}\alpha,
\end{equation}
with

\begin{equation}
\label{4.27}
\alpha=\lim_{s\rightarrow1}\left[(s-1)^{\tfrac{k(k+1)}{2}}\zeta_{f}(s)\right].
\end{equation}

In the end we obtain that

\begin{equation}
\label{4.28}
\sum_{\substack{m \ \text{monic} \\ \text{deg}(m)\leq z}}\frac{d_{k}(m^{2})}{|m|}\prod_{\substack{P \ \text{monic} \\ \text{irreducible} \\ P\mid m}}\left(\frac{|P|}{|P|+1}\right)\sim C(k)(z)^{k(k+1)/2}.
\end{equation}
Therefore we can conclude that

\begin{equation}
\label{4.29}
S_{2}\ll|D|(\log_{q}|D|)^{k(k+1)/2}.
\end{equation}

\subsection{\textbf{Estimating $S_{1}$.}}

It remains to evaluate $S_{1}$ and for that we need an ``approximate" functional equation for $L(\tfrac{1}{2},\chi_{D})$. Using Lemma \ref{lem1} with $s=\frac{1}{2}$ we have that

\begin{align}
\label{4.30}
S_{1}&=\sum_{D\in\mathcal{H}_{2g+1,q}}\left(\sum_{\text{deg}(f_{1})\leq g}\frac{\chi_{D}(f_{1})}{|f_{1}|^{1/2}}+\sum_{\text{deg}(f_{2})\leq g-1}\frac{\chi_{D}(f_{2})}{|f_{2}|^{1/2}}\right)\nonumber \\
&\times\Bigg(\sum_{\substack{n_{1},\ldots,n_{k-1} \\ \text{deg}(n_{j})\leq x \\ j=1,\ldots,k-1}}\frac{\chi_{D}(n_{1}\ldots n_{k-1})}{\sqrt{|n_{1}|\ldots|n_{k-1}|}}\Bigg)\nonumber \\
&=\sum_{\text{deg}(f_{1})\leq g}\frac{1}{\sqrt{|f_{1}|}}\sum_{\substack{n_{1},\ldots,n_{k-1} \\ \text{deg}(n_{j})\leq x \\ j=1,\ldots,k-1}}\frac{1}{\sqrt{|n_{1}|\ldots|n_{k-1}|}}\sum_{D\in\mathcal{H}_{2g+1,q}}\left(\frac{D}{f_{1}n_{1}\ldots n_{k-1}}\right)\nonumber \\
&+\sum_{\text{deg}(f_{2})\leq g-1}\frac{1}{\sqrt{|f_{2}|}}\sum_{\substack{n_{1},\ldots,n_{k-1} \\ \text{deg}(n_{j})\leq x \\ j=1,\ldots,k-1}}\frac{1}{\sqrt{|n_{1}|\ldots|n_{k-1}|}}\sum_{D\in\mathcal{H}_{2g+1,q}}\left(\frac{D}{f_{2}n_{1}\ldots n_{k-1}}\right).
\end{align}

In the last equality in equation \eqref{4.30} the sums over $f_{1}$ and $f_{2}$ are exactly the same, with the only difference being the size of the sums, i.e., $\text{deg}(f_{1})\leq g$ and $\text{deg}(f_{2})\leq g-1$. We estimate only the $f_{1}$ sum in the last equality and the result being the same for the $f_{2}$ sum just replacing $g$ by $g-1$.

If $f_{1}n_{1}\ldots n_{k-1}$ is not a square then an application of Lemma \ref{lem4} gives us that

\begin{align}
\label{4.31}
&\sum_{\text{deg}(f_{1})\leq g}\frac{1}{\sqrt{|f_{1}|}}\sum_{\substack{n_{1},\ldots,n_{k-1} \\ \text{deg}(n_{j})\leq x \\ j=1,\ldots,k-1}}\frac{1}{\sqrt{|n_{1}|\ldots|n_{k-1}|}}\sum_{D\in\mathcal{H}_{2g+1,q}}\left(\frac{D}{f_{1}n_{1}\ldots n_{k-1}}\right)\nonumber \\
&\ll\sum_{\text{deg}(f_{1})\leq g}\frac{1}{\sqrt{|f_{1}|}}\sum_{\substack{n_{1},\ldots,n_{k-1} \\ \text{deg}(n_{j})\leq x \\ j=1,\ldots,k-1}}\frac{1}{\sqrt{|n_{1}|\ldots|n_{k-1}|}}|D|^{\tfrac{1}{2}}|f_{1}n_{1}\ldots n_{k-1}|^{\tfrac{1}{4}}\nonumber\\
&=|D|^{\tfrac{1}{2}}\sum_{\text{deg}(f_{1})\leq g}|f_{1}|^{-\tfrac{1}{4}}\sum_{\text{deg}(n_{1})\leq x}|n_{1}|^{-\tfrac{1}{4}}\cdots\sum_{\text{deg}(n_{k-1})\leq x}|n_{k-1}|^{-\tfrac{1}{4}}\nonumber \\
&\ll|D|^{\tfrac{1}{2}}q^{\tfrac{3}{4}g}(q^{x})^{(k-1)\tfrac{3}{4}}.
\end{align}
With our choice of $x$, we have that for $f_{1}n_{1}\ldots n_{k-1}$ not a square

\begin{align}
\label{4.32}
&\sum_{\text{deg}(f_{1})\leq g}\frac{1}{\sqrt{|f_{1}|}}\sum_{\substack{n_{1},\ldots,n_{k-1} \\ \text{deg}(n_{j})\leq x \\ j=1,\ldots,k-1}}\frac{1}{\sqrt{|n_{1}|\ldots|n_{k-1}|}}\sum_{D\in\mathcal{H}_{2g+1,q}}\left(\frac{D}{f_{1}n_{1}\ldots n_{k-1}}\right)\nonumber \\
&\ll|D|^{\tfrac{39}{40}}.
\end{align}

For $f_{2}n_{1}\ldots n_{k-1}$ not equal to a perfect square, the same reasoning gives

\begin{align}
\label{4.33}
&\sum_{\text{deg}(f_{2})\leq g-1}\frac{1}{\sqrt{|f_{2}|}}\sum_{\substack{n_{1},\ldots,n_{k-1} \\ \text{deg}(n_{j})\leq x \\ j=1,\ldots,k-1}}\frac{1}{\sqrt{|n_{1}|\ldots|n_{k-1}|}}\sum_{D\in\mathcal{H}_{2g+1,q}}\left(\frac{D}{f_{2}n_{1}\ldots n_{k-1}}\right)\nonumber \\
&\ll|D|^{\tfrac{39}{40}}.
\end{align}

It remains to estimate the main--term in $S_{1}$. If $f_{1}n_{1}\ldots n_{k-1}$ is a perfect square then

\begin{multline}
\label{4.34}
\sum_{D\in\mathcal{H}_{2g+1,q}}\sum_{\substack{f_{1},n_{1},\ldots,n_{k-1} \\ \text{deg}(f_{1})\leq g \\ \text{deg}(n_{j})\leq x \\ j=1,\ldots,k-1}}\frac{1}{\sqrt{|f_{1}||n_{1}|\ldots|n_{k-1}|}}\chi_{D}(f_{1}n_{1}\ldots n_{k-1}) \\
=\sum_{\substack{f_{1},n_{1},\ldots,n_{k-1} \\ \text{deg}(f_{1})\leq g \\ \text{deg}(n_{j})\leq x \\ j=1,\ldots,k-1}}\frac{1}{\sqrt{|f_{1}||n_{1}|\ldots|n_{k-1}|}}\sum_{D\in\mathcal{H}_{2g+1,q}}\chi_{D}(f_{1}n_{1}\ldots n_{k-1}).
\end{multline}

By Lemma \ref{lem3} we have that \eqref{4.34} becomes

\begin{align}
\label{4.35}
&\sum_{D\in\mathcal{H}_{2g+1,q}}\sum_{\substack{f_{1},n_{1},\ldots,n_{k-1} \\ \text{deg}(f_{1})\leq g \\ \text{deg}(n_{j})\leq x \\ j=1,\ldots,k-1}}\frac{1}{\sqrt{|f_{1}||n_{1}|\ldots|n_{k-1}|}}\chi_{D}(f_{1}n_{1}\ldots n_{k-1})\nonumber \\
&=\sum_{\substack{f_{1},n_{1},\ldots,n_{k-1} \\ \text{deg}(f_{1})\leq g \\ \text{deg}(n_{j})\leq x \\ j=1,\ldots,k-1}}\frac{1}{\sqrt{|f_{1}||n_{1}|\ldots|n_{k-1}|}}\frac{|D|}{\zeta_{A}(2)}\prod_{\substack{P \ \text{monic} \\ \text{irreducible} \\ P\mid f_{1}n_{1}\ldots n_{k-1}}}\left(\frac{|P|}{|P|+1}\right)\nonumber \\
&+O\Bigg(\sum_{\substack{f_{1},n_{1},\ldots,n_{k-1} \\ \text{deg}(f_{1})\leq g \\ \text{deg}(n_{j})\leq x \\ j=1,\ldots,k-1}}\frac{1}{\sqrt{|f_{1}||n_{1}|\ldots|n_{k-1}|}}|D|^{\tfrac{1}{2}}|f_{1}n_{1}\ldots n_{k-1}|^{\varepsilon}\Bigg)
\end{align}

If we call $a_{f}=\prod_{P\mid f}\left(\frac{|P|}{|P|+1}\right)$, then we have

\begin{align}
\label{4.36}
&\sum_{D\in\mathcal{H}_{2g+1,q}}\sum_{\substack{f_{1},n_{1},\ldots,n_{k-1} \\ \text{deg}(f_{1})\leq g \\ \text{deg}(n_{j})\leq x \\ j=1,\ldots,k-1}}\frac{1}{\sqrt{|f_{1}||n_{1}|\ldots|n_{k-1}|}}\chi_{D}(f_{1}n_{1}\ldots n_{k-1})\nonumber \\
&=\frac{|D|}{\zeta_{A}(2)}\sum_{\text{deg}(m)\leq\frac{g+(k-1)x}{2}}\frac{a_{m^{2}}d_{k}(m^{2})}{|m|}+O\left(|D|^{\tfrac{1}{2}}q^{g(\varepsilon-\tfrac{1}{2})+g}(q^{x(\varepsilon-\tfrac{1}{2})+x})^{k-1}\right).
\end{align}
With our choice of $x$ we have that the $O$--term above is $\ll|D|^{39/40}$.

The last step is to estimate the main term contribution

\begin{equation}
\label{4.37}
\frac{|D|}{\zeta_{A}(2)}\sum_{\text{deg}(m)\leq\frac{g+(k-1)x}{2}}\frac{a_{m^{2}}d_{k}(m^{2})}{|m|}.
\end{equation}
By employing the same reasoning of Rudnick and Soundararajan \cite[page 10]{Ru-So2} we write $n_{1}\cdots n_{k-1}=rh^{2}$ where $r$ and $h$ are monic polynomials and $r$ is square--free. Then $f_{1}$ is of the form $rl^{2}$. With this notation the main term contribution is

\begin{equation}
\label{4.38}
\frac{|D|}{\zeta_{A}(2)}\sum_{\substack{n_{1},\ldots,n_{k-1} \\ n_{1}\cdots n_{k-1}=rh^{2} \\ \text{deg}(n_{j})\leq x \\ j=1,\ldots,k-1}}\frac{1}{|rh|}\sum_{\substack{l \ \text{monic} \\ \text{deg}(l)\leq\frac{g-\text{deg}(r)}{2}}}\frac{1}{|l|}a_{rhl}.
\end{equation}

Note that $\text{deg}(r)\leq (k-1)x$ and an easy calculation as those used in \cite[page 10]{Ru-So2} and \cite[Lemma 5.7 and pages 2812--2813]{And-Kea} gives that the sum over $l$ above is

\begin{equation}
\label{4.39}
\sum_{\substack{l \ \text{monic} \\ \text{deg}(l)\leq\frac{g-\text{deg}(r)}{2}}}\frac{1}{|l|}a_{rhl}\sim C(r,h)a_{rh}g,
\end{equation}
for some positive constant $C(r,h)$.

Therefore follows that the main term contribution to \eqref{4.36} is

\begin{align}
\label{4.40}
&\gg|D|(\log_{q}|D|)\sum_{\substack{n_{1},\ldots,n_{k-1} \\ n_{1}\cdots n_{k-1}=rh^{2} \\ \text{deg}(n_{j})\leq x \\ j=1,\ldots,k-1}}\frac{1}{|rh|}a_{rh}\nonumber \\
&\gg|D|(\log_{q}|D|)\sum_{\substack{r,h \\ \text{deg}(rh^{2})\leq x}}\frac{d_{k-1}(rh^{2})}{|rh|}a_{rh}\nonumber \\
&\gg|D|(\log_{q}|D|)^{k(k+1)/2},
\end{align} 
where the last bound follows by activating the same estimate as proved in past section, replacing $k$ by $k-1$. The same argument applies to the second sum in \eqref{4.29} replacing $g$ by $g-1$. Therefore we can conclude that 

\begin{equation}
\label{4.41}
S_{1}\gg|D|(\log_{q}|D|)^{k(k+1)/2}.
\end{equation}

Combining \eqref{4.29} and \eqref{4.41} finishes the prove of Theorem \ref{thm:main}.

\vspace{0.5cm}

\noindent \textit{Acknowledgement.} I am very happy to thank the American Institute of Mathematics (AIM) where this work was finished during the workshop ``Arithmetic Statistics over Finite Fields and Function Fields 2014".

\noindent I would like to express my gratitude to Professor Ze\'{e}v Rudnick for his constant encouragement and for his helpful comments in an earlier draft of this manuscript. I also would like to thank to the comments of an anonymous referee.

\noindent The author also would like to thank the support of a William Hodge Fellowship (EPSRC) and an IH\'{E}S Postdoctoral Research Fellowship. This research was also supported by EPSRC grant EP/K021132X/1.

\end{document}